\newtheorem{theorem}{Theorem}[section]
\newtheorem{lemma}{Lemma}[section]
\newcommand{\mO}{\mathcal{O}}
\newcommand{\ia}{\mathfrak{a}}
\newcommand{\iif}{\mathfrak{f}}
\newcommand{\im}{\mathfrak{m}}
\newcommand{\ip}{\mathfrak{p}}
\newcommand{\iq}{\mathfrak{q}}
\newcommand{\Z}{\mathbb{Z}}
\newcommand{\Q}{\mathbb{Q}}
\newcommand{\F}{\mathbb{F}}
\theoremstyle{remark}
\title{On the Average Exponent of CM elliptic curves modulo $p$ }
\author{Kim, Sungjin }
\begin{document}
    \maketitle

    \begin{abstract}
    Let $E$ be an elliptic curve defined over $\Q$ and with complex multiplication by $\mO_K$, the ring of integers in an imaginary quadratic field $K$. It is known that $E(\F_p)$ has a structure
    \begin{equation}
    E(\F_p)\simeq \Z/d_p\Z \oplus \Z/e_p\Z.
    \end{equation}
    with $d_p|e_p$.
    We give an asymptotic formula for the average order of $e_p$, with improved error term, and upper bound estimate for the average of $d_p$. 
    \end{abstract}

    \section{Introduction}
    Let $E$ be an elliptic curve over $\Q$, and $p$ be a prime of good reduction. Denote $E(\F_p)$ the group of $\F_p$-rational points of $E$. It is known that $E(\F_p)$ has a structure
    \begin{equation}
    E(\F_p)\simeq \Z/d_p\Z \oplus \Z/e_p\Z.
    \end{equation}
    with $d_p|e_p$.
    By Weil's bound, we have
    \begin{equation}
    |E(\F_p)|=p+1-a_p
    \end{equation}
    with $|a_p|<2\sqrt{p}$.
    We fix some notations before stating results. Let $E[k]$ be the $k$-torsion points of the group $E(\overline{\Q})$. Denote $\Q(E[k])$ the $k$-th division field, which is obtained by adjoining coordinates of $E[k]$. Denote $n_k$ the field extension degree $[\Q(E[k]):\Q]$.
    Recently, T.Freiberg and P.Kurlberg ~\cite{TP} started investigating the average order of $e_p$(In the summation, we take $0$ in place of $e_p$ when $E$ has a bad reduction at $p$). They obtained that there exists a constant $c_E\in (0,1)$ such that
    \begin{equation}
    \sum_{p\leq x} e_p = c_E \textrm{Li}(x^2)+O(x^{19/10}(\log x) ^{6/5})
    \end{equation}
    under GRH, and
    \begin{equation}
    \sum_{p\leq x} e_p = c_E \textrm{Li}(x^2)+O(x^2\log \log \log x / \log x \log \log x).
    \end{equation}
    unconditionally when $E$ has CM.
    More recently, J.Wu ~\cite{JW} improved their error terms in both cases
    \begin{equation}
    \sum_{p\leq x} e_p = c_E \textrm{Li}(x^2)+O(x^{11/6}(\log x) ^{1/3})
    \end{equation}
    under GRH, and
    \begin{equation}
    \sum_{p\leq x} e_p = c_E \textrm{Li}(x^2)+O(x^2/(\log x)^{9/8}).
    \end{equation}
    unconditionally when $E$ has CM.

    In this paper we improve the unconditional error term in CM case by using a number field analogue of Bombieri-Vinogradov theorem due to ~\cite[Theorem 1]{H}.

    \begin{theorem}
    Let $E$ be a CM elliptic curve defined over $\Q$ and with complex multiplication by $\mO_K$, the ring of integers in an imaginary quadratic field $K$. Let $N$ be the conductor of $E$. Let $A,B>0$, and $N\leq (\log x)^A$. Then we have
    $$
    \sum_{p\leq x, p\nmid N} e_p = c_E \textrm{Li}(x^2)+O_{A,B}(x^2/(\log x)^B).
    $$
    where
    $$
    c_E = \sum_{k=1}^{\infty}\frac{1}{n_k}\sum_{dm=k}\frac{\mu(d)}{m}.
    $$
    \end{theorem}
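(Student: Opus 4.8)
The plan is to convert the sum over $e_p$ into a family of prime-counting problems indexed by the level $k$ of the torsion that is fully rational, and then to control the resulting error terms by the number-field Bombieri--Vinogradov theorem cited as \cite[Theorem 1]{H}. The driving arithmetic identity is the following. Since $E(\F_p)\simeq \Z/d_p\Z\oplus\Z/e_p\Z$ with $d_p\mid e_p$, we have $e_p=|E(\F_p)|/d_p$, and $E[k]\subseteq E(\F_p)$ if and only if $k\mid d_p$. Writing $h=\mu*u$ in Dirichlet convolution with $u(m)=1/m$, so that $h(k)=\sum_{dm=k}\mu(d)/m$, the relation $\mathbf 1*h=u$ gives $1/n=\sum_{k\mid n}h(k)$ for every $n$. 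Applying this with $n=d_p$ and interchanging the (for each $p$, finite) summations yields
\[
\sum_{p\le x,\,p\nmid N} e_p=\sum_{k\ge 1} h(k)\sum_{\substack{p\le x,\,p\nmid N\\ E[k]\subseteq E(\F_p)}}\big(p+1-a_p\big).
\]
Since the inner condition cuts out the primes splitting completely in $\Q(E[k])$, of density $1/n_k$, and $\sum_{p\le x}p\sim\textrm{Li}(x^2)$, the expected main term is $\textrm{Li}(x^2)\sum_k h(k)/n_k=c_E\textrm{Li}(x^2)$; the $+1$ and $-a_p$ parts are $\ll x^{3/2}$ and will be absorbed into the error.

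The heart of the argument, and the step I expect to be the main obstacle, is the passage from the splitting condition in the division field $\Q(E[k])$ to a ray-class congruence in the CM field $K$. By the theory of complex multiplication, for $k\ge 2$ a good prime $p$ can satisfy $E[k]\subseteq E(\F_p)$ only if it is ordinary, i.e. splits as $p=\pi\overline\pi$ in $K$, in which case $E(\F_p)\cong \mO_K/(\pi-1)\mO_K$ as $\mO_K$-modules and $E[k]\subseteq E(\F_p)$ is equivalent to $\pi\equiv 1\pmod{k\mO_K}$. Thus $\#\{p\le t:E[k]\subseteq E(\F_p)\}$ equals, up to $O(1)$ ramified primes and a harmless unit/conjugation bookkeeping, a count of degree-one prime ideals $\ip=(\pi)$ of $K$ with $N\ip\le t$ in a fixed ray class modulo $\im=k\mO_K$. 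Matching the density of this ray-class count with $1/n_k$, so that the main terms line up, is where one must track the roots of unity $\mO_K^\times$, the factor $[\Q(E[k]):K(E[k])]$, and the excluded primes $p\mid N$; this dictionary is precisely what turns the problem into an equidistribution statement over ray classes of the fixed field $K$.

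With this reduction I would truncate the $k$-sum at $K=(\log x)^{B+1}$. For the tail $k>K$ the condition forces $k^2\mid|E(\F_p)|\le(\sqrt x+1)^2$, so $k\le\sqrt x+1$, and Brun--Titchmarsh in $K$ gives $\#\{p\le x:E[k]\subseteq E(\F_p)\}\ll x/k^2$; since $|h(k)|\le 1$ and $|E(\F_p)|\ll x$, the tail contributes $\ll x^2\sum_{k>K}k^{-2}\ll x^2/K\ll x^2/(\log x)^B$, and the truncation of the constant costs only $O(\textrm{Li}(x^2)/K)$ because $|h(k)|/n_k\ll k^{-2}$ (as $n_k\gg k^2$ for a CM curve). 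For the main range $k\le K$, setting $\delta(t,k)=\#\{p\le t:E[k]\subseteq E(\F_p)\}-n_k^{-1}\textrm{Li}(t)$ and using partial summation bounds the $p$-weighted error by $\ll x\max_{t\le x}|\delta(t,k)|$; after disposing of the negligible $+1$ and $a_p$ sums it remains to prove
\[
\sum_{k\le K}\max_{t\le x}|\delta(t,k)|\ll_{A,B}\frac{x}{(\log x)^{B}}.
\]
Since $N\im=k^2\le K^2=(\log x)^{2B+2}$ stays far below the Bombieri--Vinogradov range $x^{1/2}(\log x)^{-C}$, this is exactly \cite[Theorem 1]{H} applied to the fixed base field $K$ with moduli $k\mO_K$ and its parameter chosen large in terms of $B$. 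Assembling the main range, the tail, and the truncation error gives $\sum_{p\le x,\,p\nmid N}e_p=c_E\textrm{Li}(x^2)+O_{A,B}(x^2/(\log x)^B)$; throughout one keeps track of the conductor dependence so that the estimate remains valid in the stated range $N\le(\log x)^A$.
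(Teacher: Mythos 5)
Your route is the same as the paper's in all essentials: the convolution identity $\frac{1}{n}=\sum_{k\mid n}\sum_{dm=k}\frac{\mu(d)}{m}$, the translation of $k\mid d_p$ into complete splitting in $\Q(E[k])$, the CM dictionary converting that into a ray-class condition over the fixed field $K$ (the paper imports this from Akbary--Murty as its Lemmas 2.4 and 2.5), Huxley's number-field Bombieri--Vinogradov theorem on the small-$k$ range, and the bound $\pi_E(x;k)\ll x/k^2$ on the tail. Your two deviations are harmless and arguably cleaner: you apply the identity directly to $e_p=|E(\F_p)|/d_p$ instead of first replacing $e_p$ by $p/d_p$ via Weil's bound, and you truncate at $(\log x)^{B+1}$ instead of the paper's $y\approx x^{1/4}N(\iif)^{-1}(\log x)^{-C/2}$; since the target error is only $x^2/(\log x)^B$, a logarithmic cutoff suffices and keeps every modulus far inside the Bombieri--Vinogradov range. (Two cosmetic slips: the modulus must be $k\iif$ with $\iif$ the auxiliary ideal of Lemma 2.5, not $k\mO_K$; and Lemma 2.1 gives $n_k\gg\phi(k)^2$, not $n_k\gg k^2$, which still gives the convergence you need.)

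There is, however, one genuine error: you assert that for $k\ge 2$ a good prime with $E[k]\subseteq E(\F_p)$ must be ordinary, and you then run the ray-class dictionary over all $2\le k\le(\log x)^{B+1}$. This fails at $k=2$. For a supersingular prime one has $a_p=0$, hence $d_p\mid p+1$, while the Weil pairing forces $d_p\mid p-1$, so $d_p\mid 2$: ordinariness is forced only for $k\ge 3$, and supersingular primes genuinely contribute to $\pi_E(x;2)$. This is not a negligible boundary case, because for a CM curve the supersingular primes have density $\frac{1}{2}$; e.g.\ for $y^2=x^3-x$ (CM by $\Z[i]$, full rational $2$-torsion, $n_2=1$) every good $p\equiv 3\pmod 4$ is supersingular with $2\mid d_p$. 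Since the $k=2$ term $\frac{1}{n_2}\sum_{dm=2}\frac{\mu(d)}{m}$ enters $c_E$ at main-term level, replacing $\pi_E(t;2)$ by a split-prime ray-class count would corrupt the asymptotic constant, not merely the error term. This is precisely why the paper states its passage to $K$ (Lemma 2.4 and equation (13)) only for $k\ge 3$, splits the $k=2$ term off in (12), and bounds $\max_{t\le x}|E_2(t)|$ separately by an effective Chebotarev-type estimate for the fixed field $\Q(E[2])$, citing [AM, Lemma 2.3]. With that same patch --- treat $k=2$ (and $k=1$) by unconditional prime counting in the fixed small-degree field, and start the ray-class machinery at $k=3$ --- your argument goes through and reproduces the theorem.
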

    We are also interested in the average behavior of $d_p$. For the average of $d_p$, we have an upper bound result. We apply the number field analogue of Brun-Titchmarsh inequality due to ~\cite[Theorem 4]{HL}.
    \begin{theorem}
    Let $E$ be a CM elliptic curve defined over $\Q$ and with complex multiplication by $\mO_K$, the ring of integers in an imaginary quadratic field $K$. Let $N$ be the conductor of $E$. Let $A>0$, and $N\leq (\log x)^A$. Then we have
    $$
    \sum_{p\leq x, p\nmid N} d_p \ll_A x\log\log x,
    $$
    where the implied constant is absolute.
    \end{theorem}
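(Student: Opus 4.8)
The plan is to start from the elementary identity $n=\sum_{m\mid n}\varphi(m)$ applied to $n=d_p$, which gives
$$\sum_{p\le x,\,p\nmid N}d_p=\sum_{p\le x,\,p\nmid N}\ \sum_{m\mid d_p}\varphi(m)=\sum_{m\ge 1}\varphi(m)\,\pi_E(x;m),\qquad \pi_E(x;m):=\#\{p\le x,\ p\nmid N:\ m\mid d_p\}.$$
The first observation I would record is that $m\mid d_p$ forces $E[m]\subseteq E(\F_p)$, hence $m^2\mid|E(\F_p)|$; by Weil's bound $|E(\F_p)|\le(\sqrt p+1)^2$, so $\pi_E(x;m)=0$ as soon as $m>\sqrt x+1$. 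Thus the outer sum is supported on $m\le\sqrt x+1$, and everything reduces to estimating $\pi_E(x;m)$ uniformly in this range.

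Next I would translate the condition $m\mid d_p$ into a ray-class condition. Since $E$ has CM by $\mO_K$, for ordinary $p$ the Frobenius is an element $\pi_p\in\mO_K$ with $p=N(\pi_p)$, and $m\mid d_p$ is equivalent (up to the finitely many units of $\mO_K$ and to bounded factors coming from $N$) to $\pi_p\equiv 1\pmod{m\mO_K}$, i.e. to $\mathfrak p=(\pi_p)$ lying in the trivial ray class modulo $m\mO_K$; equivalently $p$ splits completely in $\Q(E[m])$. This yields two estimates. A trivial one comes from writing $\pi_p=1+m\gamma$ and counting lattice points of $1+m\mO_K$ in a disc of area $\asymp x$, giving $\pi_E(x;m)\ll x/m^2$. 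A stronger one, valid for $m$ away from $\sqrt x$, comes from the number-field Brun--Titchmarsh inequality \cite[Theorem 4]{HL} applied to the ray class modulo $m\mO_K$, whose ray class number is $\asymp\varphi_K(m):=|(\mO_K/m\mO_K)^\times|$; this gives
$$\pi_E(x;m)\ll\frac{x}{\varphi_K(m)\,\log(x/m^2)},$$
which I would combine with the elementary lower bound $\varphi_K(m)\ge\varphi(m)^2$.

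I would then split the $m$-sum at $\sqrt x/\log x$. For the bulk $m\le\sqrt x/\log x$ the Brun--Titchmarsh bound together with $\varphi(m)/\varphi_K(m)\le 1/\varphi(m)$ reduces matters to
$$x\sum_{m\le\sqrt x/\log x}\frac{1}{\varphi(m)\,\log(x/m^2)},$$
which, by partial summation against $\sum_{m\le t}1/\varphi(m)\asymp\log t$ and the substitution $u=\log t$, evaluates to $\asymp x\log\log x$; this is precisely where the factor $\log\log x$ is produced. For the edge $\sqrt x/\log x<m\le\sqrt x+1$ I would use the trivial bound $\pi_E(x;m)\ll x/m^2$, so that with $\varphi(m)\le m$ the contribution is
$$\ll x\sum_{\sqrt x/\log x<m\le\sqrt x+1}\frac1m\ll x\log\log x,$$
the last step being the telescoping $\log(\sqrt x)-\log(\sqrt x/\log x)=\log\log x$. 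Adding the two ranges, together with the negligible term $m=1$ (which contributes $\pi(x)\ll x/\log x$), yields the claimed bound.

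The main obstacle is the uniform and clean application of the number-field Brun--Titchmarsh inequality: I must check that ``$m\mid d_p$'' corresponds to a single ray class modulo $m\mO_K$, absorbing the units of $\mO_K^\times$, the distinction between $\Q(E[m])$ and $K(E[m])$, and the split-versus-inert behaviour of $p$ in $K$ into bounded constants, and that the ramification from the conductor $N$ does not spoil the estimate. This is exactly what the hypothesis $N\le(\log x)^A$ is for, keeping the modulus and the implied constants under control while $m$ runs up to $\sqrt x/\log x$. A secondary point needing care is the lower bound $\varphi_K(m)\ge\varphi(m)^2$ and the interchange in the partial summation, but these are routine once the translation to ray classes is in place.
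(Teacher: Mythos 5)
Your proposal follows essentially the same route as the paper's own proof: the identity $d_p=\sum_{m\mid d_p}\phi(m)$, a split of the resulting sum over $m$ into a bulk range treated by the Hinz--Lodemann number-field Brun--Titchmarsh inequality (with the ray-class number bounded below via $\phi(m)^2$) and an edge range treated by the trivial bound $\pi_E(x;m)\ll x/m^2$, followed by partial summation against $\sum_{m\le t}1/\phi(m)\asymp\log t$. The only substantive differences are in bookkeeping: the paper converts $m\mid d_p$ into a ray-class condition through the Akbary--Murty lemma, whose modulus $m\iif$ forces the split point $y=\sqrt{x/(3N(\iif))}$, whereas you invoke the Deuring congruence $\pi_p\equiv 1 \pmod{m\mO_K}$ directly, which is valid in the one direction needed for an upper bound and lets your split at $\sqrt{x}/\log x$ go through with modulus $m\mO_K$ alone.
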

    Note that the upper bound is sharper than the trivial bound $\ll x\log x$.

    \section{Preliminaries}
    \begin{lemma}
    Let $E$ be a CM elliptic curve defined over $\Q$ and with complex multiplication by $\mO_K$. Then for $k>2$,
    $$
    \phi(k)^2 \ll n_k \ll k^2
    $$
    where $\phi$ is the Euler function.
    \end{lemma}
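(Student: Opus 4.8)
The plan is to work over the CM field $K$, where the torsion module is governed by a one-dimensional object, and then to descend to $\Q$ at the cost of a bounded factor. Since $E/\Q$ has CM by $\mO_K$, its $j$-invariant is rational, which forces the Hilbert class field of $K$ to equal $K$; in particular $h_K=1$ and all endomorphisms of $E$ are defined over $K$. Consequently $E[k]$ is free of rank one over $\mO_K/k\mO_K$, and the Galois action over $K$ commutes with $\mO_K$, giving an embedding
$$\rho_k : \mathrm{Gal}(K(E[k])/K) \hookrightarrow (\mO_K/k\mO_K)^\times.$$
Because $K(E[k]) = \Q(E[k])\cdot K$ and $[K:\Q]=2$, comparing the two towers $\Q \subset K \subset K(E[k])$ and $\Q \subset \Q(E[k]) \subset K(E[k])$ gives $[K(E[k]):\Q(E[k])]\in\{1,2\}$ and hence
$$[K(E[k]):K] \le n_k \le 2\,[K(E[k]):K].$$
So it suffices to prove $[K(E[k]):K] \asymp |(\mO_K/k\mO_K)^\times|$ and to sandwich the Cartan order between $\phi(k)^2$ and $k^2$.

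The Cartan count is elementary bookkeeping. Writing $\mO_K/k\mO_K \cong \prod_{p^a\|k}\mO_K/p^a\mO_K$ and distinguishing whether $p$ splits, is inert, or ramifies in $K$, a direct computation of the local unit groups yields
$$|(\mO_K/k\mO_K)^\times| = k^2\prod_{p\mid k} f_p, \qquad f_p\in\{(1-1/p)^2,\ 1-1/p^2,\ 1-1/p\}$$
in the split, inert, and ramified cases respectively. Each factor satisfies $(1-1/p)^2 \le f_p \le 1$, so
$$\phi(k)^2 \le |(\mO_K/k\mO_K)^\times| \le k^2.$$
Together with $\rho_k$ this already yields the upper bound $n_k \le 2|(\mO_K/k\mO_K)^\times| \le 2k^2 \ll k^2$.

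For the matching lower bound I would invoke the main theorem of complex multiplication. The torsion field $K(E[k])$ is cut out by the Hecke character $\psi$ attached to $E/K$, whose conductor $\iif$ is fixed independently of $k$; by class field theory $\mathrm{Gal}(K(E[k])/K)$ is a quotient of a ray class group of modulus dividing $\iif k$, and since $h_K=1$ the image of $\rho_k$ in $(\mO_K/k\mO_K)^\times$ has index bounded by a constant depending only on $E$ (through $\iif$ and the finite unit group $\mO_K^\times$), not on $k$. Hence $[K(E[k]):K]\gg|(\mO_K/k\mO_K)^\times|\ge\phi(k)^2$, and combining with the tower inequality gives $n_k\gg\phi(k)^2$. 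The one substantive point — the main obstacle — is precisely this uniform index bound: one must verify that the subgroup of $(\mO_K/k\mO_K)^\times$ corresponding to $K(E[k])$ does not shrink relative to the full Cartan as $k\to\infty$, i.e. that all obstructions to the surjectivity of $\rho_k$ stem from the fixed data $\iif$ and $\mO_K^\times$. Everything else reduces to the two elementary computations above.
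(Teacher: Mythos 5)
Your argument is correct, but note that the paper never actually proves this lemma: it is stated as standard, with the real content hidden in the class field theory it quotes later (Lemmas 2.4 and 2.5, taken from \cite{AM}). Your route --- the Cartan embedding $\rho_k:\mathrm{Gal}(K(E[k])/K)\hookrightarrow(\mO_K/k\mO_K)^\times$, the tower comparison between $\Q(E[k])$ and $K(E[k])$, the elementary split/inert/ramified unit count, and a uniform index bound coming from the Hecke character of fixed conductor $\iif$ --- is exactly that standard derivation, so in substance you and the paper are doing the same thing. Two observations. First, the step you honestly flag as ``the one substantive point,'' namely that the image of $\rho_k$ has index in the full Cartan bounded independently of $k$, is precisely what the paper's Lemma 2.5 supplies: $t(k)[K(E[k]):K]=h(k\iif)$ with $t(k)$ bounded in terms of $\iif$ alone, and since $h_K=1$ and $|\mO_K^\times|\le 6$, the ray class number $h(k\iif)$ agrees with $|(\mO_K/k\iif\mO_K)^\times|$ up to a factor of at most $6$; so within the paper's framework your acknowledged gap closes by citation, and your sketch via the main theorem of CM (Frobenius at $\ip$ acting through $\psi(\ip)$, with the image of $\rho_k$ containing the reduction mod $k$ of $\{\alpha\equiv 1 \bmod \iif\}$) is the correct way to prove that cited statement. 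Second, your tower inequality $[K(E[k]):K]\le n_k\le 2[K(E[k]):K]$ is weaker than the paper's Lemma 2.4, which gives the equality $\Q(E[k])=K(E[k])$ and hence $n_k=2[K(E[k]):K]$ for $k\ge 3$, but the two-sided bound is all the lemma needs and your derivation of it is sound. The remaining ingredients you use --- $h_K=1$ because $j(E)\in\Q$ generates the Hilbert class field, freeness of $E[k]$ over $\mO_K/k\mO_K$ because the CM order is maximal, and the local computation giving $\phi(k)^2\le|(\mO_K/k\mO_K)^\times|\le k^2$ --- are all correct.
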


    \begin{lemma}
    Let $E$ be an elliptic curve over $\Q$, and $p$ be a prime of good reduction. Then
    $$k|d_p \Leftrightarrow p \textrm{ splits completely in }\Q(E[k]).$$
    \end{lemma}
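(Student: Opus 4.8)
The plan is to route the equivalence through the intermediate statement that $k\mid d_p$ holds precisely when the full $k$-torsion $E[k]$ becomes defined over $\F_p$, and then to recognize the latter as the triviality of the Frobenius action on $E[k]$.

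First I would dispose of the degenerate case $p\mid k$, where both sides fail. The left-hand side fails because the $p$-primary part of $E(\F_p)$ is cyclic: the reduction carries at most a copy of $\Z/p\Z$ of $p$-torsion, so the $p$-Sylow subgroup of $\Z/d_p\Z\oplus\Z/e_p\Z$ is cyclic, forcing $p\nmid d_p$ and hence $k\nmid d_p$. The right-hand side fails because $\Q(\zeta_p)\subseteq\Q(E[k])$ by the Weil pairing and $p$ ramifies in $\Q(\zeta_p)$ for $p$ odd, so $p$ cannot split completely (the residual configuration $p=2$, $k=2$ being degenerate and playing no role in the sequel, since good reduction precludes full rational $2$-torsion). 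So I may assume $p\nmid k$ throughout; note moreover that by the Hasse bound $d_p^2\le d_pe_p=|E(\F_p)|=p+1-a_p<(\sqrt p+1)^2$, so whenever $k\mid d_p$ we automatically have $k\le d_p<p$, consistent with $p\nmid k$.

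Next comes the purely group-theoretic step. From $E(\F_p)\cong\Z/d_p\Z\oplus\Z/e_p\Z$ with $d_p\mid e_p$, the abelian group $(\Z/k\Z)^2$ embeds into $E(\F_p)$ if and only if $k\mid\gcd(d_p,e_p)=d_p$: any embedding forces $k$ to divide both invariant factors, and conversely $k\mid d_p\mid e_p$ furnishes the embedding. Since $p\nmid k$, reduction modulo a prime above $p$ gives an isomorphism $E[k]\cong\tilde E(\overline{\F_p})[k]\cong(\Z/k\Z)^2$ of Galois modules, so this reads: $k\mid d_p$ if and only if every $k$-torsion point is $\F_p$-rational, i.e. $E[k]\subseteq E(\F_p)$.

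It remains to identify ``$E[k]\subseteq E(\F_p)$'' with complete splitting. The extension $\Q(E[k])/\Q$ is Galois, with $\mathrm{Gal}(\Q(E[k])/\Q)$ embedded in $\mathrm{GL}_2(\Z/k\Z)$ through its action on $E[k]$, and since $p\nmid kN$ the prime $p$ is unramified by the Néron–Ogg–Shafarevich criterion; its Frobenius $\mathrm{Frob}_p$ acts on $E[k]$ compatibly, under the reduction isomorphism, with the $p$-power Frobenius on $\tilde E(\overline{\F_p})[k]$. A torsion point is fixed by the $p$-power Frobenius exactly when it lies in $\tilde E(\F_p)$, so $E[k]\subseteq E(\F_p)$ is equivalent to $\mathrm{Frob}_p$ acting as the identity on $E[k]$, i.e. $\mathrm{Frob}_p=\mathrm{id}$ in $\mathrm{Gal}(\Q(E[k])/\Q)$; by the standard dictionary between Frobenius classes and decomposition this is precisely the assertion that $p$ splits completely in $\Q(E[k])$. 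Chaining the three equivalences then finishes the proof. The one point demanding care—and the step I expect to be the main obstacle—is the compatibility of the arithmetic Frobenius on $E[k]$ with the geometric $p$-power Frobenius on $\tilde E$, that is, verifying that reduction modulo a prime above $p$ intertwines the two actions; granting this (it is exactly what good reduction away from $k$ provides), everything else is the bookkeeping above.
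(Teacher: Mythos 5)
The paper does not actually prove this lemma (its ``proof'' is a one-line citation to Murty), so your argument stands or falls on its own. The core of it, the case $p\nmid k$, is correct and is the standard argument: $k\mid d_p$ iff $(\Z/k\Z)^2$ embeds in $E(\F_p)$, reduction is a Galois-equivariant isomorphism on $k$-torsion, and triviality of $\mathrm{Frob}_p$ on $E[k]$ is equivalent to $p$ splitting completely in the extension $\Q(E[k])/\Q$, which is unramified at $p$ by N\'eron--Ogg--Shafarevich.

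The genuine gap is your disposal of the case $p\mid k$ when $p=2$. First, the parenthetical claim that ``good reduction precludes full rational $2$-torsion'' is false: the curve $y^2+xy+y=x^3+x^2-10x-10$ has discriminant $15^4$, hence good reduction at $2$, and its $2$-division polynomial $4x^3+5x^2-38x-39=(x-3)(x+1)(4x+13)$ splits over $\Q$, so all of $E[2]$ is rational. Second, the claim would not suffice even if true, because $2$ can split completely in a nontrivial field $\Q(E[2])$ --- and in fact the equivalence you are trying to prove is \emph{false} at $p=k=2$. Take $E\colon y^2+xy=x^3-x^2-2x-1$, the conductor-$49$ curve with CM by $\mO_K$ for $K=\Q(\sqrt{-7})$, hence a curve squarely within this paper's scope; its discriminant is $-7^3$, so it has good reduction at $2$. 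Its $2$-division polynomial is $4x^3-3x^2-8x-4=(x-2)(4x^2+5x+2)$, so $\Q(E[2])=\Q(\sqrt{-7})$, in which $2$ splits completely since $-7\equiv 1\pmod 8$; yet $E(\F_2)=\{O,(0,1)\}\cong\Z/2\Z$, so $d_2=1$ and $2\nmid d_2$. Thus the right-hand side holds while the left-hand side fails, and no repair of your $p=2$, $k=2$ discussion is possible: the lemma is only true under the additional hypothesis $p\nmid k$. Note that your own cyclicity argument shows $k\mid d_p$ forces $p\nmid k$, so the forward implication is unconditional; it is the converse that fails, only at $p=2$, $k=2$, and since this concerns at most the primes dividing $k$ it is harmless in the paper's applications --- but a correct write-up must state the coprimality hypothesis rather than argue the degenerate case away.
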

    \begin{proof}
    See ~\cite[page 159]{M}.
    \end{proof}

    Let $N$ be the conductor of $E$, and denote
    $$\pi_E(x;k)=\# \{p\leq x: p \nmid N, \ \ p \textrm{ splits completely in $\Q(E[k])$}\}$$

    \begin{lemma}
    For $2\leq k\leq 2\sqrt{x}$, we have
    $$
    \pi_E(x;k)\ll \frac{x}{k^2}
    $$
    where the implied constant is absolute.
    \end{lemma}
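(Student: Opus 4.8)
The plan is to use the complex multiplication to rewrite the complete-splitting condition as a congruence for the Frobenius element in $\mathcal{O}_K$, and then to estimate the resulting count by a lattice-point argument. By the preceding lemma, $p$ splits completely in $\Q(E[k])$ if and only if $k\mid d_p$, which is equivalent to the inclusion of the full $k$-torsion $E[k]\simeq(\Z/k\Z)^2$ into $E(\F_p)$. In particular $E(\F_p)$ is then non-cyclic, so for $k\geq 2$ and $p\geq 5$ the prime $p$ cannot be supersingular; hence, after discarding the finitely many primes dividing $6N$ or ramifying in $K$ (which contribute $O(1)$), $p$ splits in $K$ as $p=\pi\bar\pi$ with $\pi\in\mathcal{O}_K$ the Frobenius and $a_p=\pi+\bar\pi$. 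By the theory of CM reduction one has an isomorphism of $\mathcal{O}_K$-modules $E(\F_p)\simeq \mathcal{O}_K/(\pi-1)\mathcal{O}_K$, whence $E[k]\subseteq E(\F_p)$ if and only if $k\mid(\pi-1)$ in $\mathcal{O}_K$, i.e.\ $\pi\equiv 1 \pmod{k\mathcal{O}_K}$.

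Sending each such prime $p$ to a choice of its Frobenius generator $\pi$ with $\pi\equiv 1\pmod{k\mathcal{O}_K}$ defines a map into $\{\pi\in\mathcal{O}_K:\ |\pi|^2\leq x,\ \pi\equiv 1 \pmod{k\mathcal{O}_K}\}$ that is boundedly-many-to-one (each fibre has size at most $2|\mathcal{O}_K^\times|$), since distinct primes give distinct ideals $(\pi)$. Dropping the primality of $N\pi=|\pi|^2$ we obtain
\[
\pi_E(x;k)\ll \#\{\pi\in 1+k\mathcal{O}_K:\ |\pi|^2\leq x\}+O(1).
\]
The set on the right consists of the points of the translated lattice $1+k\mathcal{O}_K$ lying in the disk of radius $\sqrt{x}$; since $k\mathcal{O}_K$ has covolume $\asymp k^2$ and shortest vector of length $\asymp k$, the standard lattice-point estimate yields
\[
\#\{\pi\in 1+k\mathcal{O}_K:\ |\pi|^2\leq x\}\ll \frac{x}{k^2}+\frac{\sqrt{x}}{k}+1.
\]
For $2\leq k\leq 2\sqrt{x}$ one has $\sqrt{x}/k\leq 2x/k^2$ and $1\leq 4x/k^2$, so all three terms are $\ll x/k^2$, which is the claimed bound. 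The implied constants depend only on $K$, hence only on $E$, and are independent of $k$ and $x$.

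The main obstacle is the reduction step: one must invoke the $\mathcal{O}_K$-module structure $E(\F_p)\simeq\mathcal{O}_K/(\pi-1)\mathcal{O}_K$ in order to upgrade the mere divisibility $k^2\mid\#E(\F_p)$ to the sharper congruence $\pi\equiv 1\pmod{k\mathcal{O}_K}$, and to check that the Frobenius assignment is boundedly-many-to-one after accounting for units and for complex conjugation. It is precisely this square-root--type congruence, unavailable for non-CM curves without an equidistribution input, that produces the saving from $x/k$ down to $x/k^2$. Alternatively the final count could be carried out by the number-field Brun--Titchmarsh inequality, but for the present upper bound the elementary lattice-point estimate is sufficient.
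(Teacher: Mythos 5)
Your approach is sound and, relative to this paper, genuinely different in presentation: the paper disposes of this lemma by citation (``See A.~Cojocaru~\cite[Lemma 2.6]{AC}, and note that there are only nine possibilities of $K$''), whereas you reconstruct a self-contained proof --- reducing complete splitting in $\Q(E[k])$ to the congruence $\pi\equiv 1\pmod{k\mO_K}$ for the Frobenius $\pi$ via the $\mO_K$-module isomorphism $E(\F_p)\simeq \mO_K/(\pi-1)\mO_K$ at ordinary primes, and then counting points of the translated lattice $1+k\mO_K$ in the disk $|\pi|^2\leq x$. This is in fact the standard mechanism behind the cited result: the saving from $x/k$ to $x/k^2$ comes precisely from the covolume $\asymp k^2$ of $k\mO_K$, and your observation that the range $k\leq 2\sqrt{x}$ absorbs the boundary terms $\sqrt{x}/k+1$ into $x/k^2$ is exactly right.

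One step is wrong as stated, though it is cheap to repair. You claim that for $k\geq 2$ and $p\geq 5$, non-cyclicity of $E(\F_p)$ rules out supersingular reduction. That fails for $k=2$: if $p\geq 5$ is supersingular then $a_p=0$, so $|E(\F_p)|=p+1$, and the Weil pairing gives $d_p\mid p-1$ while $d_p^2\mid p+1$; hence $d_p\mid 2$, and $d_p=2$ does occur (for instance $y^2=x^3-x$ has full rational $2$-torsion, so every supersingular prime $p\equiv 3\pmod 4$ has $2\mid d_p$). Such primes are inert in $K$, have no split Frobenius $\pi$, and escape your lattice count entirely, so your argument as written does not bound $\pi_E(x;2)$. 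The implication you need is only valid for $k\geq 3$, where $k\mid d_p$ forces $d_p\geq 3>2$. The repair is to treat $k=2$ separately by the trivial bound $\pi_E(x;2)\leq \pi(x)\ll x/\log x\ll x/k^2$, and run your argument only for $3\leq k\leq 2\sqrt{x}$. Finally, to obtain the \emph{absolute} implied constant asserted in the lemma, rather than one depending on $K$ or $E$, you should add the paper's closing observation: $K$ ranges over only nine imaginary quadratic fields, so the $K$-dependence of your lattice constants can be subsumed into an absolute constant.
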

    \begin{proof}
    See A.Cojocaru ~\cite[Lemma 2.6]{AC}, and note that there are only nine possibilities of $K$.
    \end{proof}

    We state some class field theory background. For the proofs, see ~\cite[Lemma 2.6, 2.7]{AM}.
    \begin{lemma}
    If $k\geq 3$ then $\Q(E[k])=K(E[k])$.
    \end{lemma}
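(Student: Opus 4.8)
The plan is to prove the equality by reducing it to the single containment $K \subseteq \Q(E[k])$. Indeed, $\Q(E[k]) \subseteq K(E[k])$ is immediate from the definitions, and once $K \subseteq \Q(E[k])$ is established we get $K(E[k]) = K \cdot \Q(E[k]) = \Q(E[k])$, forcing equality. To prove $K \subseteq \Q(E[k])$, I would work with the mod-$k$ Galois representation $\rho \colon \mathrm{Gal}(\overline{\Q}/\Q) \to \mathrm{Aut}(E[k])$ given by the action on torsion. Its fixed field is exactly $\Q(E[k])$, so $\mathrm{Gal}(\overline{\Q}/\Q(E[k])) = \ker\rho$, and the desired containment is equivalent to the inclusion $\ker\rho \subseteq \mathrm{Gal}(\overline{\Q}/K)$.

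The main input is that $E$ has CM by the full ring $\mO_K$, so on a suitable model $E(\C) \cong \C/\mO_K$ and hence $E[k] \cong \mO_K/k\mO_K$ as $\mO_K$-modules; in particular the annihilator of $E[k]$ in $\mO_K$ is exactly $k\mO_K$. A second, equally crucial, input is that the endomorphisms coming from $\mO_K \setminus \Z$ are defined over $K$ and not over $\Q$, together with the conjugation formula $\sigma \circ \alpha \circ \sigma^{-1} = \sigma(\alpha)$ for $\sigma \in \mathrm{Gal}(\overline{\Q}/\Q)$ and $\alpha \in \mO_K = \mathrm{End}(E)$, where $\sigma(\alpha) = \alpha$ when $\sigma$ fixes $K$ and $\sigma(\alpha) = \bar\alpha$ when $\sigma$ restricts to complex conjugation on $K$.

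With these in hand the argument is short. Suppose for contradiction that $\tau \in \ker\rho$ but $\tau \notin \mathrm{Gal}(\overline{\Q}/K)$, so that $\tau|_K$ is complex conjugation. For every $\alpha \in \mO_K$ and $P \in E[k]$ we then have $\tau(\alpha P) = \bar\alpha(\tau P) = \bar\alpha P$, while also $\tau(\alpha P) = \alpha P$ since $\alpha P \in E[k]$ and $\tau$ acts trivially on $E[k]$; hence $(\alpha - \bar\alpha)P = 0$ for all $P$. Because $E[k] \cong \mO_K/k\mO_K$, this forces $\alpha - \bar\alpha \in k\mO_K$ for every $\alpha \in \mO_K$. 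Taking $\alpha$ to be a generator $\omega$ of $\mO_K = \Z[\omega]$ gives $\omega - \bar\omega = \sqrt{d_K}$, where $d_K$ is the discriminant of $K$, so that $\sqrt{d_K} \in k\mO_K$; squaring yields $k^2 \mid d_K$. Since a fundamental discriminant is squarefree up to a possible factor of $4$, this is impossible once $k \geq 3$ — and this is precisely where the hypothesis $k \geq 3$ enters, the nine possible fields $K$ all having $|d_K|$ with no square factor exceeding $4$. The contradiction gives $\ker\rho \subseteq \mathrm{Gal}(\overline{\Q}/K)$, which completes the proof.

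I expect the only delicate point to be the bookkeeping of the Galois action on the CM endomorphisms, namely the field of definition of $\mO_K$ and the conjugation formula $\sigma \circ \alpha \circ \sigma^{-1} = \sigma(\alpha)$; a sign error or an overlooked base-field issue there would break the whole argument. Everything else is elementary once the module identification $E[k] \cong \mO_K/k\mO_K$ is granted, and the final divisibility obstruction $k^2 \nmid d_K$ for $k \geq 3$ is a direct check on the nine fields.
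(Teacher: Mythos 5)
Your proof is correct. Note, however, that the paper itself does not prove this lemma at all: it simply defers to Lemmas 2.6 and 2.7 of Akbary--Murty~\cite{AM}, so there is no in-paper argument to compare against; what you have written is the standard argument (and essentially the one underlying the cited source): the conjugation formula $\sigma\circ[\alpha]\circ\sigma^{-1}=[\sigma(\alpha)]$ for the normalized CM action, the identification of the annihilator of $E[k]$ in $\mO_K$ as exactly $k\mO_K$, and the divisibility obstruction $k^2\mid d_K$, which is impossible for $k\geq 3$ since a fundamental discriminant has no square divisor exceeding $4$. Two small refinements are worth recording. First, you invoke $E(\C)\simeq\C/\mO_K$, which uses class number one (automatic here, since a CM curve defined over $\Q$ forces $h_K=1$); but the only fact you need---that $E[k]\simeq\mO_K/k\mO_K$ as $\mO_K$-modules---holds for any lattice with CM by $\mO_K$, because $\ia/k\ia\simeq\mO_K/k\mO_K$ for every fractional ideal $\ia$ of a Dedekind domain, so this step requires no class-number hypothesis. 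Second, in the final step, from $\sqrt{d_K}=k\beta$ with $\beta\in\mO_K$ you should note that $\beta^2=d_K/k^2$ is rational and integral, hence lies in $\Z$ (equivalently, take norms to get $|d_K|=k^2N(\beta)$); this justifies ``squaring yields $k^2\mid d_K$'' as a divisibility in $\Z$. Your worry about the normalization of $\mO_K\simeq\mathrm{End}(E)$ is harmless: the two possible identifications differ by complex conjugation, and the argument only needs $\sigma(\alpha)=\bar\alpha\neq\alpha$ for $\sigma$ nontrivial on $K$ and $\alpha\notin\Z$, which holds under either choice.
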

    \begin{lemma}
    Let $E/\Q$ have CM by $\mO_K$ and $k\geq 1$ be an integer. Then there is an ideal $\iif$ of $\mO_K$ and $t(k)$ ideal classes mod $k\iif$ with the following property:

    If $\ip$ is a prime ideal of $\mO_K$ with $\ip \nmid k\iif$, then
    $$\ip \textrm{ splits completely in }K(E[k])\Leftrightarrow \ip\sim\im_1, \textrm{ or } \im_2, \textrm{ or } \cdots, \textrm{ or } \im_{t(k)} \textrm{ mod }k\iif.$$
    Moreover
    $$t(k)[K(E[k]):K]=h(k\iif),$$
    where
    $$t(k)\leq c\phi(\iif)\prod_{\ip\mid\iif}\left(1+\frac{1}{N(\ip)-1}\right).$$
    Here $c$ is an absolute constant and $\phi(\iif)$ is the number field analogue of the Euler function.
    \end{lemma}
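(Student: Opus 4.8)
The plan is to derive the lemma from the main theorem of complex multiplication together with the ray class number formula, exploiting that $E/\Q$ having CM by the \emph{maximal} order $\mO_K$ forces $h_K=1$ (these are the nine fields referred to after Lemma 2.3). First I would attach to $E$ its Hecke character (Gr\"ossencharacter) $\psi$ of $K$, of conductor $\iif$. By the main theorem of CM the extension $K(E[k])/K$ is abelian, one has an isomorphism of $\mO_K$-modules $E[k]\cong\mO_K/k\mO_K$, and for every prime $\ip\nmid k\iif$ the Frobenius $\mathrm{Frob}_\ip$ acts on $E[k]$ as multiplication by $\psi(\ip)\bmod k$. Taking $\iif$ to be the conductor of $\psi$ identifies the ideal $\iif$ in the statement.

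Next I would establish the splitting criterion. Since $\ip$ (with $\ip\nmid k\iif$) splits completely in $K(E[k])$ precisely when $\mathrm{Frob}_\ip$ acts trivially on $E[k]$, the criterion becomes $\psi(\ip)\equiv 1\pmod{k\mO_K}$. The key point is that this condition is constant on ray classes modulo $k\iif$: if $\ip\sim\iq\pmod{k\iif}$ then $\ip\iq^{-1}=(\alpha)$ with $\alpha\equiv 1\pmod{k\iif}$, so $\psi(\ip)$ and $\psi(\iq)$ differ by a factor $\equiv 1\pmod{k}$. Hence the primes that split completely are exactly those lying in a certain set of ray classes mod $k\iif$; denoting their number by $t(k)$ and choosing representatives yields $\im_1,\dots,\im_{t(k)}$. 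By Artin reciprocity these classes form the kernel of the surjection $\mathrm{Cl}_{k\iif}\twoheadrightarrow\mathrm{Gal}(K(E[k])/K)$, which gives $t(k)=h(k\iif)/[K(E[k]):K]$, i.e. $t(k)\,[K(E[k]):K]=h(k\iif)$.

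For the bound I would estimate both factors via the ray class number formula. Writing $\phi_K(\im)=\#(\mO_K/\im)^\times$ and $w=|\mO_K^\times|$, the formula (with $h_K=1$) gives $h(k\iif)=\phi_K(k\iif)/[\mO_K^\times:U_{k\iif}]\le\phi_K(k\iif)$, where $U_{k\iif}$ is the group of units $\equiv 1\pmod{k\iif}$. For the degree I would identify $\mathrm{Gal}(K(E[k])/K)$ with the image of $\mathrm{Cl}_{k\iif}$ in $(\mO_K/k)^\times$ under $[\mathfrak a]\mapsto\psi(\mathfrak a)\bmod k$; since representatives may be chosen $\equiv 1\pmod{\iif}$ and reduction $(\mO_K/k\iif)^\times\to(\mO_K/k)^\times$ is surjective, this image is $(\mO_K/k)^\times$ modulo the unit-valued contribution of $\mO_K^\times$ and the nebentypus, whence $[K(E[k]):K]\ge\phi_K(k)/w$. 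Combining, $t(k)\le w\,\phi_K(k\iif)/\phi_K(k)$. A multiplicativity computation gives $\phi_K(k\iif)/\phi_K(k)\le N(\iif)$, and the elementary identity $(1-N(\ip)^{-1})(1+(N(\ip)-1)^{-1})=1$ shows $N(\iif)=\phi_K(\iif)\prod_{\ip\mid\iif}(1+(N(\ip)-1)^{-1})$; together these deliver the stated bound with an absolute constant $c$.

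The main obstacle is the input from complex multiplication and, above all, the uniformity in $k$. Concretely, one must set up $\psi$ and its conductor $\iif$ correctly --- including the nebentypus character forced on a curve defined over $\Q$ --- so that the Galois action is literally multiplication by $\psi(\ip)\bmod k$; and one must control, \emph{uniformly in $k$}, the unit index $[\mO_K^\times:U_{k\iif}]$ and the image of $\mO_K^\times$ in $(\mO_K/k)^\times$ so that the lower bound $[K(E[k]):K]\ge\phi_K(k)/w$ is genuinely $k$-independent. Once the degree is pinned down, the ray class number bookkeeping is routine. These are precisely the facts quoted from \cite{AM}, so in the paper I would cite them directly; the sketch above records the argument behind the citation.
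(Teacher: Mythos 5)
Your proposal is correct and is essentially the paper's own treatment: the paper offers no argument for this lemma, stating it as class field theory background and citing ~\cite[Lemmas 2.6, 2.7]{AM}, and your sketch --- the Hecke character $\psi$ of conductor $\iif$, constancy of the splitting condition $\psi(\ip)\equiv 1 \pmod{k\mO_K}$ on ray classes mod $k\iif$, Artin reciprocity giving $t(k)=h(k\iif)/[K(E[k]):K]$, and the ray class number formula with $h_K=1$ plus $[K(E[k]):K]\geq \phi_K(k)/w$ yielding the bound with $c\leq w\leq 6$ --- is a faithful reconstruction of the argument behind that citation. Since you conclude, as the paper does, that the lemma should be quoted from ~\cite{AM}, there is nothing further to reconcile.
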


    Let $\pi_K(x;\iq,\ia)=\# \{\ip: \textrm{ prime ideal; } N(\ip)\leq x, \textrm{ and }\ip\sim\ia\textrm{ mod }\iq\}$.
    The following is a number field analogue of the Bombieri-Vinogradov theorem due to Huxley ~\cite[Theorem 1]{H}.
    \begin{lemma}
    For each positive constant $B$, there is a positive constant $C=C(B)$ such that
    $$\sum_{N(\iq)\leq Q} \max_{(\ia,\iq)=1}\max_{y\leq x}\frac{1}{T(\iq)}\left|\pi_K(y;\iq,\ia)-\frac{\textrm{Li}(y)}{h(\iq)}\right|\ll\frac{x}{(\log x)^B},$$
    where $Q=x^{1/2}(\log x)^{-C}$. The implied constant depends only on $B$ and on the field $K$.
    \end{lemma}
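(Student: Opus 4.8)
Since this lemma is exactly Huxley's number-field analogue of the Bombieri--Vinogradov theorem, the plan is to transplant the classical large-sieve proof to the setting of ray class characters and Hecke $L$-functions. First I would pass from the prime-counting function $\pi_K(y;\iq,\ia)$ to the weighted sum $\psi_K(y;\iq,\ia)=\sum_{N(\ib)\le y,\ \ib\sim\ia}\Lambda_K(\ib)$, where $\Lambda_K$ is the von Mangoldt function on integral ideals; the contribution of non-prime prime-power ideals and the passage back to $\pi_K$ by partial summation cost only lower-order terms that survive the averaging comfortably. Using orthogonality of the ray class group mod $\iq$, I would write
$$
\psi_K(y;\iq,\ia)-\frac{\psi_K(y)}{h(\iq)}=\frac{1}{h(\iq)}\sum_{\chi\ne\chi_0}\bar\chi(\ia)\,\psi_K(y,\chi),\qquad \psi_K(y,\chi)=\sum_{N(\ib)\le y}\Lambda_K(\ib)\chi(\ib),
$$
where $\chi$ runs over the nontrivial ray class characters mod $\iq$. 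Since $|\bar\chi(\ia)|=1$, the inner bound is automatically uniform in $\ia$, which handles the $\max_{(\ia,\iq)=1}$, and the whole problem is reduced to bounding the character sums $\psi_K(y,\chi)$ on average over the modulus.

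Next I would induce every character to its primitive counterpart: each $\chi$ mod $\iq$ is induced by a primitive ray class character $\chi^*$ of conductor $\iif_\chi\mid\iq$, and $\psi_K(y,\chi)$ and $\psi_K(y,\chi^*)$ differ only by a contribution supported on prime ideals dividing $\iq$, which is negligible after summation. Grouping by conductor, the target is controlled by
$$
\sum_{N(\iif)\le Q}\ \sum_{\chi\bmod\iif}^{*}\ \max_{y\le x}\left|\psi_K(y,\chi)\right|,
$$
the inner sum being over primitive ray class characters, and the two essential analytic inputs are now visible: (i) the large-sieve inequality for Hecke/ray class characters over $K$, together with Gallagher's device for removing the maximum over $y$; and (ii) a zero-density estimate for the family of Hecke $L$-functions $L(s,\chi)$, the analogue of the Montgomery--Huxley density theorem, which bounds $\sum_\chi N(\alpha,T;\chi)$ and converts the large-sieve mean value into a pointwise bound on $\psi_K(y,\chi)$ through the explicit formula.

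I would then invoke the classical zero-free region for Hecke $L$-functions, combined with the Deuring--Heilbronn repulsion phenomenon and Siegel's theorem to dispose of a possible exceptional real zero; this is the step that renders the implied constant ineffective and depending only on $K$ and $B$. Feeding the zero-density bound into the large-sieve mean value and summing over conductors $N(\iif)\le Q$ with $Q=x^{1/2}(\log x)^{-C}$, a sufficiently large choice $C=C(B)$ forces the total error below $x/(\log x)^B$. The normalizing weight $T(\iq)$ in the statement is precisely what absorbs the variation of $h(\iq)$ and the ramification data across moduli, so that the average is measured against the correct density.

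The genuinely hard part is input (ii): a zero-density estimate for Hecke $L$-functions that is uniform across the entire family of ray class characters of bounded conductor. This is the core of Huxley's work, and it is where the arithmetic geometry of $K$ enters nontrivially, through the counting of ideals of given norm, the behaviour of the conductor-discriminant under ramification, and the interplay of the regulator and class number in the functional equation. Everything else in the argument parallels the rational case and is essentially bookkeeping once these two estimates are in hand.
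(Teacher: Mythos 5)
The first thing to note is that the paper does not prove this lemma at all: it is quoted verbatim as Huxley's Theorem 1 from [H] and used as a black box, so the only meaningful comparison is between your outline and Huxley's own argument. On that score your sketch has the right architecture: the reduction from $\pi_K$ to $\psi_K$, the orthogonality decomposition over ray class characters mod $\iq$ (which, as you say, makes the bound automatically uniform in $\ia$), the passage to primitive characters grouped by conductor, and the two analytic pillars --- a large sieve inequality for Hecke characters with Gallagher's device to remove the maximum over $y$, and a zero-density estimate for Hecke $L$-functions uniform in the conductor --- supplemented by a Siegel--Walfisz-type input for small conductors, which is indeed the source of the ineffective constant. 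This is precisely how Huxley's series of papers on the large sieve for algebraic number fields proceeds, with the cited third paper supplying the zero-density results.

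That said, as a standalone proof your proposal is an annotated table of contents rather than a proof: the two inputs you label (i) and (ii) are exactly the theorems whose proofs occupy Huxley's papers, and nothing in your text establishes them --- you acknowledge this for (ii) but it applies equally to (i), since the number-field large sieve requires genuine work (counting ideals via lattice points, handling units and the infinite places) that has no formal analogue in the rational case. One concrete inaccuracy is worth flagging: the weight $T(\iq)$ is not there to ``absorb the variation of $h(\iq)$ and the ramification data''; it is the number of residue classes modulo $\iq$ occupied by units of $\mO_K$, a normalization arising from the passage between counting field elements and counting ideals in the sieve, and it is bounded by the order of the unit group --- which is exactly why the paper can note $T(\iq)\leq 6$ for imaginary quadratic $K$. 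In short: your outline is a faithful reconstruction of the strategy behind the cited result, but had the lemma actually required proof within the paper, essentially all of the work would still remain.
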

    There is a number field analogue of Brun-Titchmarsh inequality due to J. Hinz and M. Lodemann ~\cite[Theorem 4]{HL}.
    \begin{lemma}
    Let $\mathfrak{H}$ denote any of the $h(\iq)$ elements of the group of ideal-classes mod $\iq$ in the narrow sense. If $1\leq N\iq <X$, then
    $$
    \sum_{\substack{{N\ip <X}\\{\ip\in\mathfrak{H}}}}1\leq 2\frac{X}{h(\iq)\log\frac{X}{N\iq}}\left\{1+O\left(\frac{\log\log 3\frac{X}{N\iq}}{\log\frac{X}{N\iq}}\right)\right\}.
    $$
    \end{lemma}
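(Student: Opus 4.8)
The statement is the number-field analogue of the classical Brun--Titchmarsh inequality, and the plan is to prove it by running the Selberg upper bound sieve over the integral ideals of $\mO_K$, paralleling the classical argument for $\pi(x;q,a)$. First I would fix the sifting sequence
$$
\mathcal{A}=\{\ia\subseteq\mO_K : \ia\in\mathfrak{H},\ N\ia<X\},
$$
and sieve out the ideals divisible by some prime ideal $\ip$ with $N\ip\leq z$ and $\ip\nmid\iq$. Then every prime ideal $\ip\in\mathfrak{H}$ with $z<N\ip<X$ survives the sieve, so the sifted cardinality $S(\mathcal{A},z)$ dominates the left-hand side up to the $O(z)$ contribution of the prime ideals of norm at most $z$; since $z$ will be chosen around $(X/N\iq)^{1/2}$, this correction is of lower order.

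The arithmetic input is a count of the elements of $\mathcal{A}$ divisible by a fixed integral ideal $\mathfrak{d}$ with $(\mathfrak{d},\iq)=1$. Writing $\ia=\mathfrak{d}\ib$ with $\ib$ ranging over ideals in the twisted class $\mathfrak{H}[\mathfrak{d}]^{-1}$ of norm below $X/N\mathfrak{d}$, one needs the ray-class refinement of the Dedekind ideal-counting asymptotic,
$$
\#\{\ia\in\mathcal{A}:\mathfrak{d}\mid\ia\}=\frac{\kappa_K}{h(\iq)}\cdot\frac{X}{N\mathfrak{d}}+r(\mathfrak{d}),
$$
where $\kappa_K$ is the global ideal density and the ideals are equidistributed among the $h(\iq)$ narrow ray classes mod $\iq$. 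Feeding the multiplicative local densities $g(\mathfrak{d})$ attached to the weight $1/N\mathfrak{d}$ into Selberg's $\Lambda^2$ construction converts $S(\mathcal{A},z)$ into a main term of size
$$
\frac{\kappa_K X}{h(\iq)\,G(z)},\qquad G(z)=\sum_{\substack{N\mathfrak{d}\leq z\\(\mathfrak{d},\iq)=1}}g(\mathfrak{d}),
$$
plus the usual Selberg remainder $\sum_{N\mathfrak{d}\leq z^2}3^{\nu(\mathfrak{d})}|r(\mathfrak{d})|$.

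The heart of the estimate is then the evaluation $G(z)\sim\kappa_K\log z$, which expresses the one-dimensional (linear) character of the sieve and follows from the Prime Ideal Theorem: the prime ideals of $K$ have density one per unit of $\log N\ip$, so $\sum_{N\ip\leq z}1/N\ip\sim\log\log z$ and the associated Euler product forces $G(z)=\kappa_K\log z\{1+O(\log\log z/\log z)\}$. Crucially, the global constant $\kappa_K$ cancels between the size of $\mathcal{A}$ and $G(z)$, leaving the ray class number $h(\iq)$ in the denominator as the exact analogue of $\phi(q)$ (the Euler-factor corrections at primes dividing $\iq$ being absorbed into $h(\iq)$). Choosing $z^2=X/N\iq$ turns $1/\log z$ into $2/\log(X/N\iq)$, producing both the constant $2$ and the secondary term $O(\log\log 3(X/N\iq)/\log(X/N\iq))$ recorded in the statement.

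The main obstacle, exactly as over $\Q$, is \emph{uniformity in the modulus} $\iq$: the remainder $\sum_{N\mathfrak{d}\leq z^2}3^{\nu(\mathfrak{d})}|r(\mathfrak{d})|$ must be shown to be of strictly smaller order than the main term for every $\iq$ with $N\iq<X$, which is what validates the inequality across the whole range $1\leq N\iq<X$ rather than for small moduli only. This demands a ray-class ideal count whose error is genuinely uniform in $\iq$, together with care over the prime ideals dividing $\iq$ and over the archimedean conditions defining $\mathfrak{H}$; correctly handling the infinite places is precisely what promotes the result to the narrow-sense ray class group used here. As this uniform sieve bound is exactly the theorem of Hinz and Lodemann, for the purposes of this paper I would invoke their result in the stated form, the sketch above indicating the route by which it is obtained.
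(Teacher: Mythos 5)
The paper offers no proof of this lemma at all: it is quoted verbatim as a known result, namely Theorem 4 of Hinz and Lodemann ~\cite{HL}, and your proposal ultimately does exactly the same, deferring the uniform-in-$\iq$ details to that citation. Your accompanying sketch of the Selberg $\Lambda^2$ sieve over integral ideals (equidistribution in narrow ray classes, cancellation of the ideal density $\kappa_K$, the choice $z^2=X/N\iq$ producing the constant $2$) is a correct outline of how such a bound is proved, so the proposal matches the paper's approach.
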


    We are now ready to prove Theorem 1.1. From now on, $E$ is an elliptic curve over $\Q$ that has CM by $\mO_K$, where $K$ is one of the nine imaginary quadratic field with class number 1. Let $N$ be the conductor of $E$.
    \section{Proof of the theorem 1.1}
    By Weil's bound, we have
    \begin{equation}
    \sum_{p\leq x, p\nmid N} e_p = \sum_{p\leq x, p\nmid N} \frac{p}{d_p}+O\left(\frac{x^{3/2}}{\log x}\right).
    \end{equation}
    As shown in both ~\cite{TP} and  ~\cite{JW}, we use the following elementary identity
    \begin{equation}
    \frac{1}{k}=\sum_{dm \mid k}\frac{\mu(d)}{m}.
    \end{equation}
    Thus we obtain
    \begin{align*}
    \sum_{p\leq x, p\nmid N}\frac{p}{d_p} &=\sum_{p\leq x, p\nmid N} p\sum_{dm|d_p}\frac{\mu(d)}{m}\\
     &=\sum_{k\leq 2\sqrt{x}} \sum_{dm=k}\frac{\mu(d)}{m}\sum_{p\leq x, p\nmid N, k|d_p}p.
    \end{align*}
    Then we split the sum into two parts as in ~\cite{JW}.
    \begin{align*}
    S_1&=\sum_{k\leq y}\sum_{dm=k}\frac{\mu(d)}{m}\sum_{p\leq x, p\nmid N, k|d_p}p, \\
    S_2&=\sum_{y<k\leq 2\sqrt{x}}\sum_{dm=k}\frac{\mu(d)}{m}\sum_{p\leq x, p\nmid N, k|d_p}p.
    \end{align*}
    Here a variable $y$ is to be chosen later within $3\leq y\leq 2\sqrt{x}$. We treat $S_2$ using trivial estimate
    \begin{equation}
    \left|\sum_{dm=k}\frac{\mu(d)}{m}\right|\leq 1
    \end{equation}
    and Lemma 2.3, then we obtain
    \begin{equation}
    |S_2|\ll \sum_{y<k\leq 2\sqrt{x}} x\cdot \frac{x}{k^2} \ll \frac{x^2}{y}.
    \end{equation}
    Let $\pi_E(x;k)= \frac{\textrm{Li}(x)}{n_k}+E_k(x)$. Our goal for treating $S_1$ is making use of Lemma 2.6.
    First, we take care of the inner sum by partial summation
    \begin{align*}
    \sum_{p\leq x, p\nmid N, k|d_p}p&=\int_{2-}^x td\pi_E(t;k)\\
    &=x\pi_E(x;k)-\int_2^x\pi_E(t;k)dt\\
    &=\frac{x\textrm{Li}(x)}{n_k}-\int_2^x \frac{\textrm{Li}(t)}{n_k}dt+O\left(x|E_k(x)|+\int_2^x|E_k(t)|dt\right)\\
    &=\frac{1}{n_k}\textrm{Li}(x^2)+O\left(x\max_{t\leq x}|E_k(t)|+1\right).
    \end{align*}
    Then we deal with $S_1$ using the trivial estimate (10) and Lemma 2.1, we have
    \begin{equation}
    S_1=c_E\textrm{Li}(x^2)+O\left(x\max_{t\leq x}|E_2(t)|\right)+O\left(\frac{x^2}{y\log x}+\sum_{3\leq k\leq y} x\max_{t\leq x}|E_k(t)|+\sqrt{x}\right)
    \end{equation}
    where $$c_E=\sum_{k=1}^{\infty}\frac{1}{n_k}\sum_{dm=k}\frac{\mu(d)}{m}.$$

    Let $\widetilde{\pi_E}(x;k)=\#\{\ip: N(\ip)\leq x, \ip\nmid k\iif , \ip \textrm{ splits completely in }K(E[k])\}$.
    By Lemma 2.4, we have
    \begin{equation}
    \pi_E(x;k)=\frac{1}{2}\widetilde{\pi_E}(x;k)+O\left(\frac{x^{1/2}}{\log x}\right)+O(\log N) \textrm{ uniformly for }k\geq 3.
    \end{equation}
    For the detailed explanation, we refer to ~\cite[page 9]{AM}.
    By Lemma 2.5, we have
    \begin{equation}
    \widetilde{\pi_E}(x;k)-\frac{\textrm{Li}(x)}{[K(E[k]):K]}=\sum_{i=1}^{t(k)}\left(\pi_K(x,k\iif,\im_i)-\frac{\textrm{Li}(x)}{h(k\iif)}\right).
    \end{equation}
    Again using Lemma 2.5 to bound $t(m)$ and applying Lemma 2.6 as in ~\cite[page 10]{AM},
    \begin{equation}
    \sum_{3\leq k\leq \frac{x^{1/4}}{N(\iif)(\log x)^{C/2}}}\max_{t\leq x}\left|\widetilde{\pi_E}(t;k)-\frac{\textrm{Li}(t)}{[K(E[k]):K]}\right|\ll_{A,B}N\log{N}\frac{x}{(\log x)^{A+B+1}},
    \end{equation}
    where $C=C(A,B)$ is the corresponding positive constant in Lemma 2.6 for the positive constant $A+B+1$. \\
    Note that $T(\iq)\leq 6$. Writing $\widetilde{E_k}(x)=\widetilde{\pi_E}(x;k)-\frac{\textrm{Li}(x)}{[K(E[k]):K]}$, and using a bound for $\max_{t\leq x}|E_2(t)|$(See ~\cite[Lemma 2.3]{AM}), we have
    \begin{equation}
    S_1=c_E\textrm{Li}(x^2)+O_{A,B}\left(\frac{x^2}{(\log x)^B}\right)+O\left(\frac{x^2}{y\log x}+\sum_{3\leq k\leq y} x\max_{t\leq x}|\widetilde{E_k}(t)|+\frac{x^{3/2}y\log N}{\log x}\right)
    \end{equation}
    Now, taking $y=\frac{x^{1/4}}{N(\iif)(\log x)^{C/2}}$, we obtain
    \begin{equation}
    S_1=c_E\textrm{Li}(x^2)+O_{A,B}\left(\frac{x^2}{(\log x)^B}+x^{7/4}N(\iif)(\log x)^{C/2-1}+\frac{x^2 N\log N}{(\log x)^{A+B+1}}+\frac{x^{7/4}\log N}{N(\iif)(\log x)^{1+C/2}}\right).
    \end{equation}
    Note that $N=N(\iif)|d_K|$, where $d_K$ is the discriminant of $K$. Combining with estimate of $|S_2|$ in (12), it follows that
    \begin{equation}
    \sum_{p\leq x, p\nmid N}\frac{p}{d_p}=c_E\textrm{Li}(x^2)+O_{A,B}\left(\frac{x^2}{(\log x)^B}+\frac{x^2N\log N}{(\log x)^{A+B+1}}+x^{7/4}N(\log x)^C\right).
    \end{equation}
    Theorem 1.1 now follows. 

    \section{Proof of Theorem 1.2}
    Let $N$ be the conductor of a CM elliptic curve $E$ satisfying $N\leq (\log x)^A$. We use the following elementary identity
    $$
    k=\sum_{dm|k} m\mu(d)
    $$
    We unfold the sum similarly as in the proof of Theorem 1.1.
    \begin{align*}
    \sum_{p\leq x, p\nmid N} d_p &=\sum_{p\leq x, p\nmid N} \sum_{dm|d_p}m\mu(d) \\
    &=\sum_{k\leq 2\sqrt{x}}\sum_{dm=k}m\mu(d)\sum_{p\leq x, p\nmid N,k|d_p}1
    \end{align*}
    We introduce a variable $y$ and split the sum as shown in the proof of Theorem 1.1. The inequality in the last line is due to the primes $\ip$ in $K$ which have degree 2 over $\Q$ and split completely in $K(E[k])$.
    \begin{align*}
    \sum_{p\leq x, p\nmid N} d_p &=\pi_E(x;2)+\sum_{3\leq k\leq 2\sqrt{x}}\phi(k)\pi_E(x;k)\\
    &\leq\frac{2x}{\log x}+\sum_{3\leq k\leq y}\phi(k)\frac{1}{2}\widetilde{\pi_E}(x;k)+\sum_{y<k\leq 2\sqrt x}\phi(k)\pi_E(x;k).
    \end{align*}
    Let $S_1$, $S_2$ denote the second sum and the third sum respectively.
    \begin{align*}
    S_1&=\sum_{3\leq k\leq y}\phi(k)\frac{1}{2}\widetilde{\pi_E}(x;k),\\
    S_2&=\sum_{y<k\leq 2\sqrt x}\phi(k)\pi_E(x;k).
    \end{align*}
    Now, we use Lemma 2.5, and 2.7 to give an upper bound for each $\widetilde{\pi_E}(x;k)$.
    \begin{equation}
    \widetilde{\pi_E}(x;k)\leq 2\frac{t(k)x}{h(k\iif)\log\frac{x}{N(k\iif)}}\left\{1+O\left(\frac{\log\log 3\frac{x}{N(k\iif)}}{\log\frac{x}{N(k\iif)}}\right)\right\}
    \end{equation}
    Then we treat $S_1$ by (19), and $S_2$ by the trivial bound ($\pi_E(x;k)\ll \frac{x}{k^2}$) in Lemma 2.3. As a result, we obtain
    \begin{align*}
    S_1&\ll x\sum_{3\leq k\leq y}\frac{\phi(k)}{n_k\log\frac{x}{k^2N(\iif)}},\\
    S_2&\ll x\sum_{y< k\leq 2\sqrt{x}}\phi(k)\frac{1}{k^2}\ll x\log\frac{\sqrt{x}}{y},\\
    \end{align*}
    where the implied constants are absolute.
    Applying partial summation to $S_1$ with $\phi(k)^2\ll n_k$, and $\sum_{k\leq t}\frac{1}{\phi(k)}=A_1\log t+O(1)$,  we obtain 
    \begin{equation}
    S_1\ll x\log\log \frac{x}{N(\iif)} \ll_A x\log\log x,
    \end{equation}
    provided that $3\leq \frac{x}{y^2N(\iif)}$. 
    
    Choosing $y=\sqrt{\frac{x}{3N(\iif)}}$, it follows that
    \begin{equation}
    S_1+S_2\ll_A x\log\log x
    \end{equation}
    Therefore, Theorem 1.2 now follows.

    Note that the trivial bound in Theorem 1.2 given by Lemma 2.3 is $\ll x\log x$. The number field analogue of Brun-Titchmarsh Inequality(Lemma 2.7) contributed to the saving.

    \end{document}